\newtheorem{theorem}{Theorem}[section]
\newtheorem{lemma}[theorem]{Lemma}
\newtheorem{definition}[theorem]{Definition}
\newcommand{\R}{\mathbb{R}}
\newcommand{\dt}{\mathrm{d}t}
\DeclareMathOperator{\Id}{Id}
\title{$\Gamma$-convergence for a class of action functionals induced by gradients of convex functions}
\author{Luigi \textsc{Ambrosio}\thanks{Scuola Normale Superiore, Pisa. E-mail: \emph{luigi.ambrosio@sns.it}} \and Aymeric \textsc{Baradat}\footnote{Institut Camille Jordan, Lyon. E-mail: \emph{baradat@math.univ-lyon1.fr}} 
	\and Yann \textsc{Brenier}\footnote{\'Ecole Normale Sup\'erieure, Paris. E-mail: \emph{brenier@dma.ens.fr}}}
\begin{document}

	\maketitle
	
	\begin{abstract}
		Given a real function $f$, the rate function for the large deviations of the diffusion process of drift $\nabla f$ given by the Freidlin-Wentzell theorem coincides with the time integral of the energy dissipation for the gradient flow associated with $f$. This paper is concerned with the stability in the hilbertian framework of this common action functional when $f$ varies. More precisely, we show that if $(f_h)_h$ is uniformly $\lambda$-convex for some $\lambda \in \R$ and converges towards $f$ in the sense of Mosco convergence, then the related functionals $\Gamma$-converge in the strong topology of curves.		
	\end{abstract}
	\section{Introduction}
	
	Action functionals of the form 
	$$
	I_f(\gamma):=\int_0^1 \Big\{ |\dot\gamma(t)|^2+|\nabla f|^2(\gamma(t))\Big\} \dt,
	$$
	and the closely related ones (since they differ by a null lagrangian, the term $2f(\gamma(1))-2f(\gamma(0))$)
	\begin{equation}\label{eq:20}
	\int_0^1|\dot\gamma(t)-\nabla f(\gamma(t))|^2\dt,
	\end{equation}
	appear in many areas of Mathematics, for instance in the Freidlin-Wentzell theory of large deviations
	for the SDE $\mathrm{d}X_t^\epsilon=\nabla f(X^\epsilon_t)\dt+\sqrt{\epsilon}\mathrm{d}B_t$ (see for instance \cite{DZ}) or in the variational theory of gradient flows pioneered by
	De Giorgi, where they correspond to the integral form of the energy dissipation (see \cite{AGS}). In this paper, we investigate the stability of the action functionals $I_f$
	with respect to $\Gamma$-convergence of the functions $f$ (actually with respect to the stronger notion of Mosco convergence, see below). More precisely,
	we are concerned with the case when the functions under consideration are $\lambda$-convex 
	and defined in a Hilbert space $H$. In this case, the functional $I_f$ is well defined if we
	understand $\nabla f(x)$ as the element with minimal norm in the subdifferential
	$\partial f(x)$: this choice, very natural in the theory of gradient flows, grants the
	joint lower semicontinuity property of $(x,f)\mapsto|\nabla  f|(x)$ that turns out to be very useful when proving stability of
	gradient flows, see \cite{SS}, \cite{AGS1} and the more recent papers \cite{DFM}, \cite{MPR} where
	emphasis is put on the convergence of the dissipation functionals. In more abstract terms, we are dealing
	with autonomous Lagrangians $L(x,p)=|p|^2+|\nabla f|^2(x)$ that are unbounded and very discontinuous
	with respect to $x$, and this is a source of difficulty in the construction of
	recovery sequences, in the proof of the $\Gamma$-limsup inequality.
	
	Our interest in this problem comes from \cite{ABB}, where we dealt with the
	derivation of the discrete Monge-Amp\`ere equation from the stochastic model of a Brownian
	point cloud, using large deviations and Freidlin-Wentzell theory, along the lines of
	\cite{Brenier}. In that case $H=\R^{Nd}$
	was finite dimensional,
	$$
	f(x):=\max_{\sigma\in\mathfrak{S}_N}\langle x,A^\sigma\rangle,
	$$
	(with $A=(A_1,\ldots,A_N)\in\R^{Nd}$ given and $A^\sigma=(A_{\sigma(1)},\ldots,A_{\sigma(N)})$ for all $\sigma \in \mathfrak{S}_N$, the set of all permutations of $\llbracket 1, N\rrbracket$),
	and the approximating functions $f_\epsilon$ were given by
	$$
	f_\epsilon(t,x)=\epsilon t\log\biggl[\frac 1{N{\rm !} }
	\sum_{\sigma\in\Sigma_N}\exp\bigg(\frac{\langle x,A^\sigma\rangle}{\varepsilon t}\biggr)\biggr].
	$$
	In that case, our proof used some simplifications due to finite dimensionality, and a
	uniform Lipschitz condition. In this paper, building upon some ideas in \cite{ABB}, we
	provide the convergence result in a more general and natural context. For the sake of simplicity,
	unlike \cite{ABB}, we consider only the autonomous case. However it should be possible to
	adapt our proof to the case when time-dependent $\lambda$-convex functions $f(t,\cdot)$ are considered,
	under additional regularity assumptions with respect to $t$, as in \cite{ABB}.
	
	In the infinite-dimensional case, Mosco convergence (see Definition~\ref{defMosco}) is stronger and more appropriate than $\Gamma$-convergence, since 
	it ensures convergence of the resolvent operators (under equi-coercitivity assumptions, the two notions
	are equivalent). Also, since in the infinite-dimensional case, the finiteness domains of the functions can be
	pretty different, the addition of the endpoint condition is an additional source of difficulties, that we handle with
	an interpolation lemma which is very much related to the structure of monotone operators, see Lemma~\ref{lem:fix_endpoints}.
	
	Defining the functionals $\Theta_{f,x_0,x_1}:C([0,1];H)\to [0,\infty]$ by
	\begin{equation}
	\label{eq:def_theta}
	\Theta_{f,x_0,x_1}(\gamma):= \left\{ 
	\begin{aligned}
	&I_f(\gamma) &&\text{if $\gamma\in AC([0,1];H)$, $\gamma(0)=x_0$, $\gamma(1)=x_1$;}\\
	&+\infty &&\text{otherwise,}
	\end{aligned} \right.
	\end{equation}
	our main result reads as follows:
	
	\begin{theorem} \label{thm:main} If $(f_h)_h$ is uniformly $\lambda$-convex for some $\lambda \in \R$, if $f_h\to f$ w.r.t. Mosco convergence, and if
		$$
		\lim_{h\to\infty} x_{h,i}= x_i,\qquad\qquad \sup_h|\nabla f_h|(x_{h,i})<\infty,\qquad i=0,\,1,
		$$
		then $\Theta_{f_h,x_{h,0},x_{h,1}}$ $\Gamma$-converge to $\Theta_{f,x_0,x_1}$ in the $C([0,1];H)$ topology.
	\end{theorem}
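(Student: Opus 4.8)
The plan is to prove the two $\Gamma$-convergence inequalities separately, using throughout the variational representation of the slope valid for a $\lambda$-convex $g$,
$$
|\nabla g|(x)=\sup_{y\neq x}\frac{\bigl(g(x)-g(y)+\tfrac\lambda2|x-y|^2\bigr)^+}{|x-y|},
$$
together with the fact (Attouch's theorem) that, under equi-$\lambda$-convexity, Mosco convergence (Definition~\ref{defMosco}) is equivalent to pointwise strong convergence on $H$ of the resolvents $J^{f_h}_\tau:=(\Id+\tau\partial f_h)^{-1}$ and of the Moreau--Yosida regularizations $(f_h)_\tau$, for every small $\tau>0$. I recall that $\tfrac{x-J^{f_h}_\tau x}{\tau}\in\partial f_h(J^{f_h}_\tau x)$, that $J^{f_h}_\tau$ is $L_\tau$-Lipschitz with $L_\tau=(1+\lambda\tau)^{-1}\to1$, and that $|\nabla f_\tau|(x)\uparrow|\nabla f|(x)$ as $\tau\to0$.

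For the $\Gamma$-liminf inequality I would take $\gamma_h\to\gamma$ in $C([0,1];H)$ with $\liminf_h\Theta_{f_h,x_{h,0},x_{h,1}}(\gamma_h)<\infty$ (the case $=\infty$ being trivial) and pass to a subsequence realizing the liminf with uniformly bounded energies. The uniform bound on $\int_0^1|\dot\gamma_h|^2\dt$ makes $(\dot\gamma_h)$ bounded in $L^2((0,1);H)$; combined with uniform convergence this yields $\gamma\in AC([0,1];H)$, $\gamma(0)=x_0$, $\gamma(1)=x_1$, $\dot\gamma_h\rightharpoonup\dot\gamma$, hence $\int_0^1|\dot\gamma|^2\dt\le\liminf_h\int_0^1|\dot\gamma_h|^2\dt$. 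For the potential term I would fix $t$ and a competitor $y\neq\gamma(t)$, use the strong--limsup part of Mosco convergence to pick $y_h\to y$ with $\limsup_h f_h(y_h)\le f(y)$ and the weak--liminf part to get $f(\gamma(t))\le\liminf_h f_h(\gamma_h(t))$; inserting these into the representation formula with $x=\gamma_h(t)$ (noting $\gamma_h(t)\to\gamma(t)$ strongly and $|\gamma_h(t)-y_h|\to|\gamma(t)-y|$) and taking the supremum over $y$ gives the pointwise bound $|\nabla f|(\gamma(t))\le\liminf_h|\nabla f_h|(\gamma_h(t))$. Fatou's lemma upgrades this to $\int_0^1|\nabla f|^2(\gamma)\dt\le\liminf_h\int_0^1|\nabla f_h|^2(\gamma_h)\dt$, and superadditivity of $\liminf$ combines the two terms.

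For the $\Gamma$-limsup inequality I may assume $I_f(\gamma)<\infty$. The core idea is to regularize $\gamma$ by the resolvents of $f_h$: set $\gamma^\tau_h:=J^{f_h}_\tau\circ\gamma$. Then $\int_0^1|\dot\gamma^\tau_h|^2\dt\le L_\tau^2\int_0^1|\dot\gamma|^2\dt$, and minimality of the slope gives $|\nabla f_h|(\gamma^\tau_h(t))\le|\nabla (f_h)_\tau|(\gamma(t))=\tau^{-1}|\gamma(t)-J^{f_h}_\tau(\gamma(t))|$. Using equi-$\lambda$-convexity together with $\sup_h|\nabla f_h|(x_{h,i})<\infty$ to dominate this quantity uniformly in $h$ on the compact set $\gamma([0,1])$, I would let $h\to\infty$ at fixed $\tau$: resolvent convergence gives $\gamma^\tau_h\to J^f_\tau\circ\gamma$ uniformly (equicontinuity from the uniform Lipschitz bound plus pointwise convergence), and dominated convergence gives $\limsup_h\int_0^1|\nabla f_h|^2(\gamma^\tau_h)\dt\le\int_0^1|\nabla f_\tau|^2(\gamma)\dt$. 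Hence $\limsup_h I_{f_h}(\gamma^\tau_h)\le L_\tau^2\int_0^1|\dot\gamma|^2\dt+\int_0^1|\nabla f_\tau|^2(\gamma)\dt$, whose right-hand side tends to $I_f(\gamma)$ as $\tau\to0$.

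It remains to enforce the boundary conditions, since $\gamma^\tau_h$ starts at $J^{f_h}_\tau(x_0)$ and ends at $J^{f_h}_\tau(x_1)$ rather than at $x_{h,0},x_{h,1}$. Here I would invoke the interpolation Lemma~\ref{lem:fix_endpoints}: both $x_{h,0}$ and the nearby point $J^{f_h}_\tau(x_{h,0})$ have controlled $f_h$-slope, and the lemma produces a connecting curve, built from the resolvent/gradient flow of $f_h$, whose action over a short interval $[0,\delta]$ is of order $\delta\sup_h|\nabla f_h|^2(x_{h,0})$ and hence negligible; symmetrically at $t=1$. Splicing these corrections onto a reparametrization of $\gamma^\tau_h$ yields admissible curves with the same asymptotic action, and a diagonal argument over $h\to\infty$, then $\tau\to0$ and $\delta\to0$ (all intermediate limits being uniform) produces a single recovery sequence with $\limsup_h\Theta_{f_h,x_{h,0},x_{h,1}}(\gamma_h)\le I_f(\gamma)$. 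I expect the endpoint correction to be the main obstacle: the smoothing forces the free endpoints to $J^{f_h}_\tau(x_i)$, and reconnecting to the prescribed $x_{h,i}$ with asymptotically vanishing action cannot be done by naive linear interpolation (which would cross regions of uncontrolled slope); it genuinely requires the monotone-operator structure encoded in Lemma~\ref{lem:fix_endpoints}, together with the hypothesis $\sup_h|\nabla f_h|(x_{h,i})<\infty$ that keeps the connecting action under control uniformly along the diagonal limit.
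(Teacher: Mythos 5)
Your proposal is correct and follows essentially the same route as the paper: lower semicontinuity of the slope along strongly convergent sequences via the variational characterization \eqref{eq:ags1} and Mosco convergence for the $\Gamma$-liminf, resolvent-regularized curves $J_{f_h,\tau}\circ\gamma$ combined with the interpolation Lemma~\ref{lem:fix_endpoints} to repair the endpoints for the $\Gamma$-limsup, and a diagonal limit in $\tau$. The only (harmless) differences are cosmetic: the paper takes $\delta=\tau$ in the endpoint correction rather than keeping a third parameter, and proves the pointwise convergence of resolvents directly from Definition~\ref{defMosco} instead of quoting Attouch's theorem.
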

	
	As a byproduct, under an additional equi-coercitivity assumption our theorem grants convergence of
	minimal values to minimal values and of minimizers to minimizers.
	Obviously the condition $x_{h,i}\to x_i$ is necessary, and we believe that at least some (possibly more refined) bounds on the
	gradients at the endpoints are necessary as well. If we ask also that $x_{h,i}$ are recovery sequences, i.e. $f_h(x_{h,i})\to f(x_i)$,
	then the result can also be read in terms of the functionals \eqref{eq:20}. 
	
	As a final comment, it would be interesting to investigate this type of convergence results also in a non-Hilbertian
	context, as it happened for the theory of gradient flows. For instance, a natural context would be the space
	of probability measures with finite quadratic moment.  
	Functionals of this form, where $f$ is a 
	constant multiple of  the logarithmic entropy, appear in the so-called entropic regularization of the Wasserstein distance (see \cite{Gentil} and the references therein).
	
	\smallskip
	\noindent
	{\bf Acknowledgements.} We dedicate this paper to Edoardo Vesentini, a great mathematician and a former
	President of the Accademia dei Lincei. As Director of the Scuola Normale, he has been the pioneer of many
	projects that shaped the Scuola Normale for many years to come. The first author acknowledges the support
	of the PRIN 2017 project  ``Gradient flows, Optimal Transport and Metric Measure Structures''.  This work was prepared during the stay of the second author at the Max Planck Institute for Mathematics in the Sciences in Leipzig, that he would like to thank for its hospitality.

	\section{Preliminaries}
	
	Let $H$ be a Hilbert space. For a function $f:H\to (-\infty,\infty]$ we denote by $D(f)$ the finiteness domain of $f$.
	We say that $f$ is $\lambda$-convex if $x\mapsto f(x)-\tfrac{\lambda}{2}|x|^2$ is convex. It is easily seen that
	$\lambda$-convex functions satisfy the perturbed convexity inequality
	$$
	f\bigl((1-t)x+ty\bigr)\leq (1-t)f(x)+t f(y)-\frac\lambda 2 t(1-t)|x-y|^2,\qquad t\in [0,1].
	$$
	
	We denote by $\partial f(x)$ the Gateaux subdifferential of $f$ at $x\in D(f)$, namely the set
	\begin{equation*}
	\partial f(x):=\left\{p\in H:\ \liminf_{t\to 0^+}\frac{f(x+th)-f(x)}{t}\geq t\langle h,p\rangle\,\,\,\forall h\in H\right\}. 
	\end{equation*}
	It is a closed convex set, possibly empty. We denote by $D(\partial f)$ the domain of the subdifferential.
	
	In the case when $f$ is $\lambda$-convex, the monotonicity of difference quotients gives the
	equivalent, non asymptotic definition:
	\begin{equation}\label{eq:def2}
	\partial f(x):=\left\{p\in H:\ f(y)\geq f(x)+\langle y-x,p\rangle+\frac{\lambda}{2}|y-x|^2\,\,\,\forall y\in H\right\}.
	\end{equation}
	For any $x\in D(\partial f)$ we consider the vector $\nabla f(x)$ as the element with minimal
	norm of $\partial f(x)$. We agree that $|\nabla f(x)|=\infty$ if either $x\notin D(f)$ of $x\in D(f)$ and $\partial f(x)=\emptyset$. 
	For $\lambda$-convex functions, relying on \eqref{eq:def2}, it can be easily proved that $\partial f(x)$ is not empty if and only if
	\begin{equation}\label{eq:ags1}
	\sup_{y\neq x}\frac{\bigl[f(x)-f(y)+\frac{\lambda}{2}|x-y|^2\bigr]^+}{|x-y|}<\infty
	\end{equation}
	and that $|\nabla f|(x)$ is precisely equal to the supremum (see for instance Theorem~2.4.9 in \cite{AGS}).
	
	For $\tau>0$ we denote by $f_\tau$ the regularized function
	\begin{equation}\label{eq:Yosida}
	f_\tau(x):=\min_{y\in H}f(y)+\frac{|y-x|^2}{2\tau}
	\end{equation}
	and we denote by $J_\tau=(\Id +\tau\partial f)^{-1}:H\to D(\partial f)$ the so-called resolvent map associating to
	$x$ the minimizer $y$ in \eqref{eq:Yosida}. When $f$ is proper, $\lambda$-convex and lower semicontinuous, existence and uniqueness
	of $J_\tau(x)$ follow by the strict convexity of $y\mapsto f(y)+|y-x|^2/(2\tau)$, as soon as $\tau<-1/\lambda$ when
	$\lambda<0$, and for all $\tau>0$ otherwise (we shall call admissible these values of $\tau$).
	We also use the notation $J_{f,\tau}$ to emphasize the dependence on $f$.
	
	Now we recall a few basic and well-known facts (see for instance \cite{Brezis}, \cite{AGS}), providing for the reader's convenience sketchy proofs.

		\begin{theorem}\label{thm:comprehensive} 
				Assume that $f:H\to (-\infty,\infty]$ is proper, $\lambda$-convex and lower semicontinuous. For all admissible $\tau>0$ one has:
				\begin{enumerate}[label=(\roman*)]
					\item $f_\tau$ is differentiable everywhere, and for all $x \in H$,
					\begin{equation}
					\label{eq:equality_gradients}
					\nabla f_\tau(x) = \frac{x - J_\tau(x)}{\tau} \in \partial f(J_\tau(x)).
					\end{equation}
					\item \label{item:lipschitz} $J_\tau$ is $(1+\lambda\tau)^{-1}$-Lipschitz, and $f_\tau\in C^{1,1}(H)$ with ${\rm Lip}(\nabla f_\tau)\leq 3/\tau$ as soon as there holds $(1+\tau\lambda)^{-1}\leq 2$.
					\item For all $x \in D(\partial f)$,
					\begin{equation}
					\label{eq:equality_gradients_domain}
					\nabla f_\tau(x + \tau \nabla f(x)) = \nabla f(x).
					\end{equation}
					\item The following monotonicity properties hold for all $x \in H$:
					\begin{equation}\label{eq:ags2}
					|\nabla f|(J_\tau (x))\leq |\nabla f_\tau|(x) = \frac{|x-J_\tau (x)|}{\tau}\leq \frac{1}{1+\lambda\tau}|\nabla f|(x). 
					\end{equation}
				\end{enumerate}
		\end{theorem}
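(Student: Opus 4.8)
The plan is to base everything on the first-order optimality condition for the strictly convex problem defining $J_\tau$, and then read off each assertion from it. Since $\tau$ is admissible, the map $y\mapsto g_x(y):=f(y)+|y-x|^2/(2\tau)$ is $(\lambda+1/\tau)$-convex with $\lambda+1/\tau>0$, hence strictly convex and coercive, so $J_\tau(x)$ is well defined and unique. First I would derive the Euler--Lagrange inclusion: testing the minimality of $y:=J_\tau(x)$ against competitors $y+th$ and letting $t\to 0^+$ yields
\[
\liminf_{t\to 0^+}\frac{f(y+th)-f(y)}{t}\geq\Big\langle\tfrac{x-y}{\tau},h\Big\rangle\qquad\forall h\in H,
\]
which by the asymptotic definition of the subdifferential means $(x-J_\tau(x))/\tau\in\partial f(J_\tau(x))$. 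This is the inclusion in \eqref{eq:equality_gradients}.

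Next I would prove the Lipschitz bound for $J_\tau$, since it is needed before differentiability. From \eqref{eq:def2} one obtains the $\lambda$-monotonicity $\langle p_1-p_2,y_1-y_2\rangle\geq\lambda|y_1-y_2|^2$ for $p_i\in\partial f(y_i)$. Applying this to $p_i=(x_i-J_\tau(x_i))/\tau$ and rearranging gives $\langle x_1-x_2,J_\tau(x_1)-J_\tau(x_2)\rangle\geq(1+\lambda\tau)|J_\tau(x_1)-J_\tau(x_2)|^2$, and Cauchy--Schwarz (recall $1+\lambda\tau>0$) yields the $(1+\lambda\tau)^{-1}$-Lipschitz estimate.

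I expect the differentiability in (i) to be the main obstacle. Writing $p:=(x-J_\tau(x))/\tau$, using $J_\tau(x)$ as a (generally sub-optimal) competitor for the problem at $x+h$ gives the one-sided bound $f_\tau(x+h)-f_\tau(x)\leq\langle p,h\rangle+|h|^2/(2\tau)$. Symmetrically, using $J_\tau(x+h)$ as a competitor at $x$ and setting $q:=(x+h-J_\tau(x+h))/\tau$ gives $f_\tau(x+h)-f_\tau(x)\geq\langle q,h\rangle-|h|^2/(2\tau)$. The delicate point is to close the gap between $p$ and $q$: the Lipschitz bound just proved gives $|q-p|=O(|h|)$, hence $\langle q,h\rangle=\langle p,h\rangle+o(|h|)$, and the two bounds sandwich $f_\tau(x+h)-f_\tau(x)=\langle p,h\rangle+o(|h|)$. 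This proves (i) with $\nabla f_\tau(x)=p$, and combined with the Lipschitz bound it gives $\mathrm{Lip}(\nabla f_\tau)\leq\tau^{-1}\bigl(1+(1+\lambda\tau)^{-1}\bigr)\leq 3/\tau$ under the stated condition, completing (ii).

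The last two items are short consequences. For (iii), setting $v:=\nabla f(x)$, the inclusion $v\in\partial f(x)$ says precisely that $x$ satisfies the optimality condition for the point $x+\tau v$; by uniqueness $J_\tau(x+\tau v)=x$, whence $\nabla f_\tau(x+\tau v)=((x+\tau v)-x)/\tau=v$. For (iv), the middle identity is the gradient formula from (i); the left inequality holds because $(x-J_\tau(x))/\tau\in\partial f(J_\tau(x))$ whereas $\nabla f(J_\tau(x))$ is the minimal-norm element of that set; and the right inequality (nontrivial only when $x\in D(\partial f)$) follows by applying $\lambda$-monotonicity to $\nabla f(x)\in\partial f(x)$ and $(x-J_\tau(x))/\tau\in\partial f(J_\tau(x))$, which gives $\langle\nabla f(x),x-J_\tau(x)\rangle\geq\tfrac{1+\lambda\tau}{\tau}|x-J_\tau(x)|^2$, and then Cauchy--Schwarz.
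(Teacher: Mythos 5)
Your proposal is correct, and for items (i)--(iii) and the first two relations in \eqref{eq:ags2} it follows essentially the same route as the paper: the Euler--Lagrange inclusion by variations around $J_\tau(x)$, the $(1+\lambda\tau)^{-1}$-Lipschitz bound for $J_\tau$ via $\lambda$-monotonicity and Cauchy--Schwarz, differentiability of $f_\tau$ by sandwiching with the two sub-optimal competitors $J_\tau(x)$ and $J_\tau(x+h)$ (you close the gap using the Lipschitz bound for $J_\tau$, which gives an $O(|h|^2)$ error, while the paper uses only the continuity of $J_\tau$, which gives $o(|h|)$ --- both suffice), and (iii) by identifying $x$ as the unique minimizer for the point $x+\tau\nabla f(x)$. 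The one place where you genuinely diverge is the rightmost inequality in \eqref{eq:ags2}: the paper differentiates the minimality inequality along the segment $t\mapsto(1-t)x+tJ_\tau(x)$ at $t=1$ to obtain $(\lambda/2+1/\tau)|x-J_\tau(x)|^2\le f(x)-f(J_\tau(x))$ and then invokes the slope representation \eqref{eq:ags1}, whereas you apply $\lambda$-monotonicity directly to the pair $\nabla f(x)\in\partial f(x)$ and $(x-J_\tau(x))/\tau\in\partial f(J_\tau(x))$ and conclude by Cauchy--Schwarz. Your variant is shorter and stays entirely within the subdifferential calculus already set up for (ii); the paper's variant uses only values of $f$ and the metric slope, which is why it is the version that survives in the purely metric framework of \cite{AGS}. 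In the Hilbertian $\lambda$-convex setting of this theorem the two arguments are equivalent (and you correctly note that the inequality is vacuous when $|\nabla f|(x)=\infty$), so your proof is complete.
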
	
	\begin{proof}
		The inclusion in~\eqref{eq:equality_gradients} follows from performing variations around $J_\tau (x)$ in~\eqref{eq:Yosida}.
		
		Before proving the equality in~\eqref{eq:equality_gradients}, let us prove the Lipschitz property for $J_\tau$ given in~\ref{item:lipschitz}. Recall that the convexity of $g=f-\frac{\lambda}{2} |\cdot|^2$ yields that $\partial f$ is $\lambda$-monotone,
		namely 
		$$
		\langle \xi-\eta,a-b\rangle\geq\lambda |a-b|^2\qquad\forall \xi\in\partial f(a),\,\,\eta\in\partial f(b).
		$$
		Given $x$ and $y$, we apply this property to $a := J_\tau(x)$, $b := J_\tau(y)$, $\xi := (x - J_\tau(x))/\tau$ and $\eta := (y - J_\tau(y))/\tau$. (Thanks to the inclusion in~\eqref{eq:equality_gradients}, we have $\xi \in \partial f(a)$ and $\eta \in \partial f(b)$.) By rearranging the terms, we get
		\begin{equation*}
		\langle x - y , J_\tau(x) - J_\tau(y) \rangle \geq (1 + \lambda \tau) |J_\tau(x) - J_\tau(y)|^2.
		\end{equation*}
		Hence, by the Cauchy-Schwarz inequality, $J_\tau$ is $(1 + \lambda \tau)^{-1}$-Lipschitz.
		
		Let us go back to proving the equality in~\eqref{eq:equality_gradients}. For any $x$ and $z$, one has (using $y=J_\tau(x)$ as an admissible competitor in the definition of $f_\tau(x+z)$)
		\begin{equation*}
		f_\tau(x+z)-f_\tau(x)\leq \frac{|J_\tau(x)-(x+z)|^2}{2\tau}-\frac{|J_\tau(x)-x|^2}{2\tau}=
		\left\langle z,\frac{x-J_\tau(x)}{\tau}\right\rangle+\frac{|z|^2}{2\tau}
		\end{equation*}
		and, reversing the roles of $x$ and $x+z$,
		$$
		f_\tau(x)-f_\tau(x+z)\leq \left\langle -z,\frac{x+z-J_\tau(x+z)}{\tau}\right\rangle+\frac{|z|^2}{2\tau}.
		$$
		These two identities together with the continuity of $J_\tau$ imply that $f_\tau$ is differentiable at $x$ and provides the equality in~\eqref{eq:equality_gradients} and hence the one in~\eqref{eq:ags2}. The Lipschitz property for $\nabla f_\tau$ announced follows directly from this identity and the Lipschitz property for $J_\tau$.
		
		To get~\eqref{eq:equality_gradients_domain}, it suffices to remark that for all $x \in D(\partial f)$, $0$ belongs to the subdifferential of the strictly convex function
		\begin{equation*}
		y \mapsto f(y) + \frac{|x + \tau \nabla f(x)-y|^2}{2\tau}
		\end{equation*}
		at $y = x$. Hence, $x$ is the minimizer of this function, and $J_\tau(x + \tau \nabla f(x)) = x$. Then, we deduce~\eqref{eq:equality_gradients} from~\eqref{eq:equality_gradients_domain}.
		
		The first inequality in~\eqref{eq:ags2} follows from the inclusion in~\eqref{eq:equality_gradients}. In order to prove the second inequality, we perform a variation along the affine curve joining $x$ to $J_\tau(x)$, namely,
		$\gamma(t):=(1-t)x+tJ_\tau(x)$. Since
		\begin{align*}
		f(J_\tau(x))+\frac{1}{2\tau}|x-J_\tau(x)|^2 &\leq f(\gamma(t)) + \frac{1}{2\tau} |x-\gamma(t)|^2\\
		&\leq (1-t) f(x) + t f(J_\tau(x)) + \frac{t}{2\tau} \big(t-\lambda\tau(1-t)\big) |x-J_\tau(x)|^2
		\end{align*}
		for all $t\in [0,1]$, taking the left derivative at $t=1$ gives
		$$
		\left(\frac\lambda 2+\frac 1\tau\right)|x-J_\tau(x)|^2\leq f(x)- f(J_\tau(x)),
		$$
		so that the representation formula \eqref{eq:ags1} for $|\nabla f|(x)$ gives
		$$
		\left(\frac\lambda 2+\frac 1\tau\right)|x-J_\tau(x)|^2\leq |\nabla f|(x)|x-J_\tau(x)|-\frac{\lambda}{2}|x-J_\tau(x)|.
		$$
		By rearranging the terms, this leads to the second inequality in \eqref{eq:ags2}.
	\end{proof}

	Another remarkable property of $|\nabla f|$, for $f$ $\lambda$-convex and lower semicontinuous, 
	is the upper gradient property, namely,
	$$
	f(\gamma(0)),f(\gamma(\delta))< \infty \quad\text{and}\quad |f(\gamma(\delta))-f(\gamma(0))|\leq \int_0^\delta|\nabla f|(\gamma(t))|\dot{\gamma}(t)|\dt
	$$
	for any $\delta>0$ and any absolutely continuous $\gamma:[0,\delta]\to H$ (with the convention $0\times\infty=0$), whenever 
	$\gamma$ is not constant and the integral in the 
	right hand side is finite (see for instance Corollary~2.4.10 in \cite{AGS} for the proof).
	
	\section{A class of action functionals}
	
	For $\delta>0$ and $f:H\to (-\infty,\infty]$ proper, $\lambda$-convex and lower semicontinuous, we consider
	the autonomous functionals $I_f^\delta:C([0,\delta];H)\to [0,\infty]$ defined by
	$$
	I^\delta_f(\gamma):=\int_0^\delta \Big\{|\dot \gamma|^2+|\nabla f|^2(\gamma)\Big\}\dt,
	$$
	set to $+\infty$ on $C([0,\delta];H)\setminus AC([0,\delta];H)$. Notice also that $I_f^\delta(\gamma)<\infty$
	implies $\gamma\in D(\partial f)$ a.e. in $(0,\delta)$.
	
	Identity~\eqref{eq:ags1} ensures the lower semicontinuity of $|\nabla f|$; hence, under a coercitivity
	assumption of the form $\{f\leq t\}$ compact in $H$ for all $t\in\R$, the infimum 
	\begin{equation}\label{eq:5}
	\Gamma_\delta(x_0,x_\delta):=\inf\left\{I^\delta_f(\gamma):\ \gamma(0)=x_0,\,\,\gamma(\delta)=x_\delta\right\}
	\qquad x_0,\,x_\delta\in H
	\end{equation}
	is always attained whenever finite. 
	
	Also, by the Young inequality and the upper gradient property of $|\nabla f|$, one has that 
	$I_f^\delta(\gamma)<\infty$ implies $\gamma(0),\,\gamma(\delta)\in D(f)$ and $2|f(\gamma(\delta))-f(\gamma(0))|\leq I_f^\delta(\gamma)$. 
	The same argument shows that we may add to $I_f^\delta$ a null Lagrangian. Namely, as done in \cite{ABB},
	we can consider the functionals
	$$
	\int_0^\delta |\dot\gamma-\nabla f(\gamma)|^2\dt
	$$
	which differ from $I^\delta_f$ precisely by the term $2f(\gamma(\delta))-2f(\gamma(0))$, whenever $\gamma$ is admissible in
	\eqref{eq:5} with $I_f^\delta(\gamma)<\infty$.
	
	Because of the lack of continuity of $x\mapsto\nabla f(x)$, very little is known in general about the regularity
	of minimizers in \eqref{eq:5}, even when $H$ is finite-dimensional. However, 
	one may use the fact that $I^1_f$ is autonomous to
	perform variations of type $\gamma\mapsto\gamma\circ (\Id+\epsilon\phi)$, $\phi\in C^\infty_c(0,\delta)$, to obtain the Dubois-Reymond
	equation (see for instance \cite{ABO})
	\begin{equation*}	\frac{\mathrm{d}}{\dt}\bigl[|\dot{\gamma}|^2-|\nabla f|^2(\gamma))\bigr]=0\qquad\text{in the sense of distributions in $(0,\delta)$}.
	\end{equation*} 
	It implies Lipschitz regularity of the minimizers when, for instance, $|\nabla f|$ is bounded on bounded sets 
	(an assumption satisfied in \cite{ABB}, but obviously too strong for some applications in infinite dimension).
	
	We will need the following lemma, estimating
	$\Gamma_\delta$ from above, to adjust
	the values of the curves at the endpoints. The heuristic idea is to interpolate on the graph of 
	$f_\tau$ and then read back this interpolation in the original
	variables. This is related to Minty's trick (see \cite{AA} for an extensive use of this idea): a rotation of
	$\pi/4$ maps the graph of the subdifferential onto the graph of an entire $1$-Lipschitz function; here we use
	only slightly tilted variables, of order $\tau$.
	
	\begin{lemma}[Interpolation]\label{lem:fix_endpoints}
		Let $f:H\to (-\infty,\infty]$ be a proper, $\lambda$-convex and lower semicontinuous function and let $\tau>0$
		be such that $(1+\tau\lambda)^{-1}\leq 2$. For all $\delta>0$ and all 
		$x_0\in D(\partial f)$, $x_\delta\in D(\partial f)$, with $\Gamma_\delta$ as in \eqref{eq:5}, one has
		\begin{equation*}
		\Gamma_\delta(x_0,x_\delta)\leq 2\delta \min_{i \in \{0,\delta\}}|\nabla f|^2(x_i)
		+\biggl(\frac {40}\delta+\frac{12\delta}{\tau^2}\biggr)|x_\delta-x_0|^2 +\biggl( 12 \delta + \frac{40\tau^2}{\delta} \biggr) |\nabla f(x_\delta)-\nabla f(x_0)|^2.
		\end{equation*}
	\end{lemma}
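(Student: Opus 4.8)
The plan is to build an explicit admissible competitor $\gamma$ for the infimum defining $\Gamma_\delta(x_0,x_\delta)$ by interpolating linearly in the ``tilted'' variables suggested by Minty's trick and then reading the interpolation back through the resolvent. Concretely, since $x_0,x_\delta\in D(\partial f)$ I set the tilted endpoints
$$y_0 := x_0 + \tau\nabla f(x_0), \qquad y_\delta := x_\delta + \tau\nabla f(x_\delta),$$
let $y(t):=(1-t/\delta)y_0+(t/\delta)y_\delta$ be the affine interpolation on $[0,\delta]$, and define $\gamma(t):=J_\tau(y(t))$. By~\eqref{eq:equality_gradients_domain} one has $J_\tau(x_i+\tau\nabla f(x_i))=x_i$, so $\gamma(0)=x_0$ and $\gamma(\delta)=x_\delta$; since $J_\tau$ and $y$ are both Lipschitz, $\gamma\in AC([0,\delta];H)$ and is therefore admissible. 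It then remains only to bound $I_f^\delta(\gamma)$ from above, and the advertised inequality will follow.

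For the kinetic term I would use that $\dot y\equiv (y_\delta-y_0)/\delta$ and that $J_\tau$ is $(1+\lambda\tau)^{-1}$-Lipschitz (Theorem~\ref{thm:comprehensive}\ref{item:lipschitz}), so that $|\dot\gamma(t)|\le (1+\lambda\tau)^{-1}|y_\delta-y_0|/\delta$ for a.e.\ $t$. Combined with the admissibility hypothesis $(1+\tau\lambda)^{-1}\le 2$, this yields $\int_0^\delta|\dot\gamma|^2\,\dt\le (4/\delta)|y_\delta-y_0|^2$.

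The potential term is the delicate one, because $\nabla f$ is not continuous and $|\nabla f|(\gamma(t))$ cannot be controlled directly. The whole point of the tilt is that every estimate is carried out upstairs, in the smooth variable $y$: by the first inequality in~\eqref{eq:ags2} one has $|\nabla f|(\gamma(t))=|\nabla f|(J_\tau(y(t)))\le|\nabla f_\tau|(y(t))$, and $\nabla f_\tau$ is $(3/\tau)$-Lipschitz by Theorem~\ref{thm:comprehensive}\ref{item:lipschitz}. Since~\eqref{eq:equality_gradients_domain} gives $\nabla f_\tau(y_0)=\nabla f(x_0)$, I anchor the Lipschitz bound at the left endpoint to get $|\nabla f|(\gamma(t))\le|\nabla f(x_0)|+(3/\tau)(t/\delta)|y_\delta-y_0|$; squaring with $(a+b)^2\le 2a^2+2b^2$ and integrating $\int_0^\delta t^2\,\dt=\delta^3/3$ gives $\int_0^\delta|\nabla f|^2(\gamma)\,\dt\le 2\delta|\nabla f(x_0)|^2+(6\delta/\tau^2)|y_\delta-y_0|^2$.

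Adding the two contributions and expanding $|y_\delta-y_0|^2\le 2|x_\delta-x_0|^2+2\tau^2|\nabla f(x_\delta)-\nabla f(x_0)|^2$ reorganizes everything into the three terms in the statement: $2\delta|\nabla f(x_0)|^2$, a multiple of $|x_\delta-x_0|^2$, and a multiple of $|\nabla f(x_\delta)-\nabla f(x_0)|^2$, with constants no larger than those claimed (in fact the Young-inequality bookkeeping leaves some slack). Finally, running the same construction with the roles of $x_0$ and $x_\delta$ exchanged, i.e.\ anchoring the gradient estimate at the right endpoint, produces the identical bound but with $|\nabla f(x_\delta)|^2$ in place of $|\nabla f(x_0)|^2$; as the other two terms are symmetric in the endpoints, keeping the smaller of the two upper bounds yields the factor $\min_{i\in\{0,\delta\}}|\nabla f|^2(x_i)$ and completes the proof. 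I expect the only genuine obstacle to be precisely the passage from the discontinuous $|\nabla f|(\gamma)$ to the Lipschitz surrogate $|\nabla f_\tau|$ along the interpolation, anchored at the endpoints through~\eqref{eq:equality_gradients_domain}; once that transfer is made, the remainder is routine.
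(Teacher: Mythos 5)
Your proof is correct and follows essentially the same route as the paper: interpolate affinely in the tilted variables $x_i+\tau\nabla f(x_i)$, map back through $J_\tau$, fix the endpoints via \eqref{eq:equality_gradients_domain}, and control $|\nabla f|(\gamma)$ by the $(3/\tau)$-Lipschitz function $|\nabla f_\tau|$ along the affine segment. The only (harmless) deviation is in the kinetic term, where you invoke the Lipschitz bound on $J_\tau$ directly rather than writing $\gamma=\tilde\gamma-\tau\nabla f_\tau(\tilde\gamma)$ as the paper does; this actually yields a slightly better constant and still fits under the stated bound.
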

	\begin{proof}
		We use Theorem~\ref{thm:comprehensive} to interpolate between
		$x_\delta$ and $x_0$ as follows: set
		$$
		\tilde{\gamma}(t):=\left(1-\frac t\delta\right)(x_0+\tau\nabla f(x_0))+\frac t\delta (x_\delta+\tau\nabla f(x_\delta)),\qquad
		\xi(t):=\nabla f_\tau(\tilde{\gamma}(t)),
		$$
		and
		$$
		\gamma(t):=J_\tau(\tilde \gamma(t)) = \tilde{\gamma}(t)-\tau \xi(t), 
		$$
		where the second equality follows from~\eqref{eq:equality_gradients}.

		Since $\xi(0)=\nabla f_\tau(x_0+\tau\nabla f(x_0))=\nabla f(x_0)$ and a similar
		property holds at time $\delta$, the path $\gamma$ is admissible.
		Let us now estimate the action of the path $\gamma$.
		
		Kinetic term (we use our Lipschitz bound for $\nabla f_\tau$ to deduce that $|\dot \xi(t)| \leq \frac{3}{\tau}|\dot{\tilde \gamma}(t)|$):
		\begin{align*}
			\int_0^\delta |\dot\gamma|^2\dt&\leq 2\int_0^\delta |\dot{\tilde\gamma}|^2\dt+ 2\tau^2\int_0^\delta |\dot\xi|^2\dt\\
			&\leq 20\int_0^\delta|\dot{\tilde\gamma}|^2\dt= \frac{20}{\delta} |(x_\delta+\tau\nabla f(x_\delta))-(x_0+\tau\nabla f(x_0))|^2\\
			&\leq \frac {40}\delta |x_\delta-x_0|^2+\frac{40\tau^2}{\delta}|\nabla f(x_\delta)-\nabla f(x_0)|^2.
		\end{align*}
		
		Gradient term (we use the first inequality in~\eqref{eq:ags2}, our Lipschitz bound for $\nabla f_\tau$, and finally~\eqref{eq:equality_gradients_domain}):
		\begin{align*}
			\int_0^\delta|\nabla f|^2(\gamma)\dt 
			&\leq\int_0^\delta|\nabla f_\tau|^2(\tilde{\gamma})\dt\\
			&\leq \int_0^\delta \left( |\nabla f_\tau|(\tilde \gamma(0)) + \frac{3}{\tau} |\tilde \gamma(t) - \tilde \gamma(0)|)\right)^2 \dt\\
			&\leq \int_0^\delta \left\{ 2 |\nabla f|^2(x_0) + \frac{18}{\tau^2} \frac{t^2}{\delta^2}|(x_\delta+\tau\nabla f(x_\delta))-(x_0+\tau\nabla f(x_0))|^2 \right\} \dt \\
			&\leq 2\delta|\nabla f|^2(x_0) + \frac{6\delta}{\tau^2}|(x_\delta+\tau\nabla f(x_\delta))-(x_0+\tau\nabla f(x_0))|^2 \\
			&\leq  2\delta|\nabla f|^2(x_0)+\frac{12\delta}{\tau^2}|x_\delta - x_0|^2+12\delta |\nabla f(x_\delta)-\nabla f(x_0)|^2.
		\end{align*}
		We get the result by gathering these two estimates, and by remarking that in the second line, we could have controlled $|\nabla f|_\tau(\tilde\gamma(t))$ by its value at time $\delta$ instead of its value at time $0$.
	
	\end{proof}
	
	Choosing $\delta=\tau$, bounding $|\nabla f|(x_i)$, $i = 0,1$ by the max of these two values, and using $ |\nabla f(x_\delta)-\nabla f(x_0)|^2 \leq 4 \max_{i \in \{0,1\}} |\nabla f|^2(x_i)$, we will apply the interpolation lemma in the form
	\begin{equation}\label{eq:14}
	\Gamma_\delta(x_0,x_\delta)\leq \frac {52}\tau|x_\delta-x_0|^2+
	210\tau \max_{i \in \{0,\delta\}}|\nabla f|^2(x_i).
	\end{equation}
	
	\section{Proof of the main result}
	
	In this section, $f_h$, $f$ denote generic proper, $\lambda$-convex and lower semicontinuous functions from $H$ to $(-\infty,\infty]$.
	
	Mosco convergence is a particular case of $\Gamma$-convergence, where the topologies used for the $\limsup$ and the
	$\liminf$ inequalities differ.
	
	\begin{definition}[Mosco convergence]\label{defMosco}
		We say that $f_h$ Mosco converge to $f$ whenever:
		\begin{itemize}
			\item[(a)] for all $x\in H$ there exist $x_h\to x$ strongly with 
			\begin{equation*}
			\limsup_h f_h(x_h)\leq f(x);
			\end{equation*}
			\item[(b)] for all sequences $(x_h)\subset H$ weakly converging to $x$, one has 
			\begin{equation*}
			\liminf_h f_h(x_h)\geq f(x).
			\end{equation*} 
		\end{itemize}
	\end{definition}
	
	It is easy to check that for sequences of $\lambda$-convex functions, 
	Mosco convergence implies the pointwise convergence of $J_{f_h,\tau}$ to $J_{f,\tau}$, contrarily to usual $\Gamma$-convergence.
	Indeed, for $\tau>0$ admissible, (a) grants 
	$$
	\limsup_{h\to\infty}\min_{y\in H}f_h(y)+\frac{|y-x|^2}{2\tau}\leq\min_{y\in H}f(y)+\frac{|y-x|^2}{2\tau},
	$$
	while (b) grants
	$$
	\liminf_{h\to\infty}\min_{y\in H}f_h(y)+\frac{|y-x|^2}{2\tau}\geq\min_{y\in H}f(y)+\frac{|y-x|^2}{2\tau},
	$$
	and the weak convergence of minimizers $y_h$ to the minimizer $y$. Eventually, the convergence of the energies
	together with
	$$
	\liminf_{h\to\infty}f_h(y_h)\geq f(y)\qquad\text{and}\qquad\liminf_{h\to\infty} |y_h-x|^2\geq |y-x|^2
	$$
	grants that both liminf are limits, and that the convergence of $y_h$ is strong.
	
	Recall that given $x_{h,0},\,x_{h,1}\in H$, the functionals $\Theta_{f_h,x_{h,0},x_{h,1}}$ defined in~\eqref{eq:def_theta}, are obtained
	from $I_{f_h}^1$ by adding endpoints constraints.
	$\Theta_{f,x_0,x_1}$ is defined analogously.
	
	We say that $\Theta_{f_h,x_{h,0},x_{h,1}}$ $\Gamma$-converge to $\Theta_{f,x_0,x_1}$ in the $C([0,1];H)$ topology if
	\begin{itemize}
		\item[(a)] for all $\gamma\in C([0,1];H)$ there exist $\gamma_h\in C([0,1];H)$ converging to $\gamma$
		with $$\limsup_{h\to\infty} \Theta_{f_h,x_{h,0},x_{h,1}}(\gamma_h)\leq\Theta_{f,x_0,x_1}(\gamma); $$
		\item[(b)] for all sequences $(\gamma_h)\subset C([0,1];H)$ converging to $\gamma$ one has 
		$$\liminf_{h\to\infty} \Theta_{f_h,x_{h,0},x_{h,1}}(\gamma_h)\geq\Theta_{f,x_0,x_1}(\gamma).$$ 
	\end{itemize}
	
	In connection with the proof of property (a) it is useful to introduce the functional
	$$
	\Gamma-\limsup_{h\to\infty}\Theta_{f_h,x_{h,0},x_{h,1}}(\gamma):=\inf\left\{\limsup_{h\to\infty}\Theta_{f_h,x_{h,0},x_{h,1}}(\gamma_h):
	\gamma_h\to\gamma\right\}
	$$
	so that (a) is equivalent to $\Gamma-\limsup_h\Theta_{f_h,x_{h,0},x_{h,1}}\leq\Theta_{f,x_0,x_1}$. Recall also
	that the $\Gamma-\limsup$ is lower semicontinuous, a property that can be achieved, for instance,
	by a diagonal argument.
	
	\begin{proof}[Proof of Theorem~\ref{thm:main}] It is clear that the endpoint condition passes to the limit with respect to the
		$C([0,1];H)$ topology, since $x_{h,i}$ converge to $x_i$. Also, it is well known that the action functional is lower semicontinuous in
		$C([0,1];H)$. Hence, the $\Gamma$-liminf inequality, namely property (b), follows immediately from
		Fatou's lemma and the variational characterization 
		\eqref{eq:ags1} of $|\nabla f|$. Indeed, for all $y\neq x$ and all sequences $x_h\to x$
		\begin{align*}
			\liminf_{h\to\infty} |\nabla f_h|^2(x_h)\geq\liminf_{h\to\infty}
			\frac{[f_h(x_h)-f_h(y_h)+\frac{\lambda}{2}|x_h-y_h|^2\bigr]^+}{|x_h-y_h|}\geq \frac{[f(x)-f(y)+\frac{\lambda}{2}|x-y|^2\bigr]^+}{|x-y|}.
		\end{align*}
		where $y_h$ is chosen as in (a) of Definition~\ref{defMosco}. Passing to the supremum, we get the inequality
		$\liminf_h|\nabla f_h|(x_h)\geq|\nabla f|(x)$, and this grants the lower semicontinuity of the gradient term in the
		functionals.
		Notice that this part of the proof works also if we assume only that $\Gamma$-$\liminf_h f_h\geq f$, for the strong topology of $H$,
		but the stronger property (namely (b) in Definition~\ref{defMosco}) is necessary because we will need in the next step
		convergence of the resolvents.
		
		So, let us focus on the $\Gamma$-limsup one, property (a). Fix a path $\gamma$ with $\Theta_{f,x_0,x_1}(\gamma)<\infty$, $\tau>0$ (with
		$(1+\tau\lambda^{-1})\leq 2$ if $\lambda<0$) and consider the perturbed paths 
		$\gamma^\tau_h(t)=J_{f_h,\tau}(\gamma(t))$, $\gamma^\tau(t)=J_{f,\tau}(\gamma(t))$; using the 
		$(1+\tau\lambda)^{-1}$-Lipschitz property of the maps $J_{f,\tau}$, the first inequality in
		\eqref{eq:ags2}, the convergence of $\gamma^\tau_h$ to $\gamma^\tau$ and eventually the second inequality in \eqref{eq:ags2} one gets
		\begin{align*}
			\limsup_{h\to\infty}\int_0^1\left\{|\dot\gamma^\tau_h|^2+|\nabla f_h|^2(\gamma^\tau_h)\right\}\dt
			&\leq
			\limsup_{h\to\infty}\int_0^1\left\{(1+\tau\lambda)^{-2}|\dot\gamma|^2+\frac{|\gamma-\gamma^\tau_h|^2}{\tau^2}\right\}\dt\\
			&\leq \int_0^1\left\{(1+\tau\lambda)^{-2} |\dot\gamma|^2 + \frac{|\gamma-\gamma^\tau|^2}{\tau^2}\right\}\dt
			\\&\leq
			(1+\tau\lambda)^{-2}\int_0^1\left\{|\dot\gamma|^2+|\nabla f|^2(\gamma)\right\}\dt.
		\end{align*}
		Also, the convergence of resolvents gives 
		$$
		\lim_{h\to\infty} J_{f_h,\tau}(x_i)=J_{f,\tau}(x_i).
		$$
		Finally, using again the inequalities \eqref{eq:ags2} and once more the convergence of resolvents, we get
		$$
		\limsup_{h\to\infty}\!|\nabla f_h|(J_{f_h,\tau}(x_i))\leq \frac{|J_{f,\tau}(x_i)-x_i|}{\tau}\leq (1+\tau\lambda)^{-1}|\nabla f|(x_i)\leq
		2|\nabla f|(x_i).
		$$
		Since the endpoints have been slightly modified by 
		the composition with $J_{f_h,\tau}$, we argue as follows. Denoting by $S$ an upper bound for
		$|\nabla f_h|(x_{h,i})$ and $2 |\nabla f|(x_i)$, we apply twice the construction of Lemma~\ref{lem:fix_endpoints}, 
		with $\delta=\tau$, to $f_h$ with endpoints $x_{h,i}$, $J_{f_h,\tau}(x_i)$, to extend the curves 
		$\gamma^\tau_h$, still denoted $\gamma_h^\tau$, to the interval $[-\tau,1+\tau]$, in such a way that (we use \eqref{eq:14} in the first inequality, and the second inequality in~\eqref{eq:ags2} in the second one)
		\begin{align*}
		 &\limsup_{h\to\infty}\int_{-\delta}^{1+\delta}\left\{ |\dot\gamma_h^\tau|^2+|\nabla f_h|^2(\gamma_h^\tau)\right\}\dt\\
			&\qquad\leq
			(1+\tau\lambda)^{-2}\int_0^1\left\{|\dot\gamma|^2+|\nabla f|^2(\gamma)\right\}\dt + 420\tau S^2 + \frac{52}{\tau}\Big\{|x_0-J_{f,\tau}(x_0)|^2 + |x_1-J_{f,\tau}(x_1)|^2\Big\}\\
			&\qquad\leq (1+\tau\lambda)^{-2} \left( \int_0^1\left\{ |\dot\gamma|^2 + |\nabla f|^2(\gamma) \right\}\dt + 472\tau S^2 \right)
		\end{align*}
		and the endpoint condition is satisfied at $t=-\tau$ and $t=1+\tau$. The limit of the curves $\gamma^\tau_h$
		in $[-\tau,1+\tau]$, still denoted $\gamma^\tau$, is the one obtained applying the construction of Lemma~\ref{lem:fix_endpoints} with
		$x_i$ and $J_{f,\tau}(x_i)$ in the intervals $[-\tau,0]$ and $[1,1+\tau]$, and which coincides with $J_{f,\tau}(\gamma(t))$ 
		on $[0,1]$.
		
		By a linear rescaling of the curves $\gamma_h^\tau$ and $\gamma^\tau$ to $[0,1]$ we obtain curves
		$\tilde{\gamma}_h^\tau$ converging to $\tilde{\gamma}^\tau$ in $C([0,1];H)$, with $\tilde\gamma^\tau$ convergent to
		$\gamma$ as $\tau\to 0$ and
		\begin{align*}
		\Gamma-\limsup_{h\to\infty}\Theta_{f_h,x_{h,0},x_{h,1}}(\tilde\gamma^\tau)
		&\leq\limsup_{h\to\infty}\Theta_{f_h,x_{h,0},x_{h,1}}(\tilde\gamma_\tau^h)\\
		&\leq (1+O(\tau))\int_0^1\left\{|\dot\gamma|^2+|\nabla f|^2(\gamma)\right\}\dt+O(\tau).
		\end{align*}
		Eventually, the lower semicontinuity of the $\Gamma$-upper limit 
		and the convergence of $\tilde{\gamma}^\tau$ to $\gamma$ provide:
		$$
		\Gamma-\limsup_{h\to\infty}\Theta_{f_h,x_{h,0},x_{h,1}}(\gamma)
		\leq \int_0^1\left\{|\dot\gamma|^2+|\nabla f|^2(\gamma)\right\}\dt.
		$$
	\end{proof}
	\vspace{-.8cm}

\end{document}